\documentclass[a4paper,12pt]{amsart}
	
\usepackage{fullpage}
\usepackage{amsfonts}
\usepackage{amssymb,amsthm,amsmath,color}
\usepackage{array}
\usepackage{mathrsfs}
\usepackage{dsfont}
\usepackage{lmodern}
\usepackage{bbm}
\usepackage{hyperref}
\usepackage{graphicx}
\usepackage[utf8]{inputenc}  
\usepackage[T1]{fontenc} 
\usepackage{stmaryrd}  
\usepackage{tikz}
\usepackage{tkz-euclide}
\usepackage[backend=biber]{biblatex}
\numberwithin{equation}{section}

\DeclareMathAlphabet{\mathbbo}{U}{bbold}{m}{n}
\newcommand{\1}{\mathbbo{1}}

\newtheorem{theorem}{Theorem}[section]

\newtheorem{lem}[theorem]{Lemma}

\newenvironment{rmq}{\stepcounter{theorem}\noindent\textbf{Remark \thetheorem}.}{}

\newcommand{\N}{\mathbb N}
\newcommand{\Z}{\mathbb Z}
\newcommand{\R}{\mathbb R}

\newcommand{\dd}{\text{d}}
\renewcommand{\mod}{\,\mathrm{mod}\,}

\begin{document}

\title{On the largest Sidon subset in a finite subset of $\R^N$}

\author{A. Bailleul}
\address{ENS Paris-Saclay, Centre Borelli, UMR 9010, 91190 Gif-sur-Yvette, France}
\email{alexandre.bailleul@ens-paris-saclay.fr}

\author{R. Riblet}
\address{ENS Paris-Saclay, Centre Borelli, UMR 9010, 91190 Gif-sur-Yvette, France}
\email{robin.riblet@ens-paris-saclay.fr}

    \maketitle

\begin{abstract}
We obtain a new lower bound on the largest Sidon subset of an arbitrary finite set of integers. If $H(n)$ denotes the minimum, over all $n$-element subsets of $\mathbb Z$, of the largest Sidon subset they contain, we prove that
$H(n)\geqslant \left(\frac{1}{3\sqrt 3}+o(1)\right)\sqrt n \gtrsim 0.19\sqrt n$. This improves a lower bound of Abbott related to a conjecture of Erd\H{o}s on Sidon subsets of arbitrary sets of integers. 
The main ingredient is a compression lemma which produces, from any finite set of integers, a large subset admitting an injective Freiman $2$-morphism into a cyclic group. Combined with Singer's covering of $\mathbb Z/(q^2+q+1)\mathbb Z$ by Sidon sets, this yields the stated bound.

We further extend the result to finite subsets of $\mathbb R^N$, uniformly in the dimension, by means of a projection argument and a Dirichlet approximation preserving Sidon's equation. As a consequence, every set of $n$ points in $\mathbb R^N$ contains a Sidon subset of cardinality at least $\left(\frac{1}{3\sqrt 3}+o(1)\right)\sqrt n$.
We also discuss an adaptation to $B_2[g]$ sets, obtaining a lower bound of order $\frac{1}{3\sqrt 3}\sqrt{gn}$, and explain how the method can be adapted to other linear additive constraints.
\end{abstract}
		
\section{Introduction}

A Sidon set, also called a $B_2$-set, is a subset of an abelian semigroup with the property that all sums of two elements are distinct; equivalently, it is a set with no non-trivial solution to the equation $x+y=z+t$. Sidon sets were first studied by Simon Sidon \cite{Simon_Sidon} in connection with Fourier series. He investigated the size of the largest Sidon subset of $\left\llbracket1,n\right\rrbracket$. This question has since been extensively studied, and it is now well known (see \cite{HalberstamRoth} or \cite{OBryant}) that the maximum size of a Sidon set in an interval of length $n$ is asymptotic to $\sqrt{n}$. We denote this maximum by $s\left(\left\llbracket 1,n\right\rrbracket\right)$. The lower bound was obtained independently by Chowla \cite{Chowla} and Erd\H{o}s \cite{ErdosTuran}, who established
\begin{equation}\label{Minsn}
   \liminf\limits_{\substack{n\rightarrow +\infty}}\dfrac{s\left(\left\llbracket 1,n\right\rrbracket\right)}{\sqrt{n}}\geqslant 1. 
\end{equation}
For the upper bound, Erd\H{o}s and Tur\'an \cite{ErdosTuran} proved that $s\left(\left\llbracket 1,n\right\rrbracket\right)<\sqrt{n}+O\left( n^{1/4}\right)$. This was later sharpened by Lindström \cite{Lindstrom_ameliore_E-T}, who proved that $s\left(\left\llbracket 1,n\right\rrbracket\right)<\sqrt{n}+n^{1/4}+1$. More recently, Carter, Hunter and O'Bryant \cite{OBryantCarterHunter} obtained
\begin{equation}\label{Majsn}
    s\left(\left\llbracket 1,n\right\rrbracket\right)<\sqrt{n}+0.98183n^{1/4}+O(1) .
\end{equation}
Thus, equations \eqref{Minsn} and \eqref{Majsn} imply that $s(n)\underset{n \to \infty}{\sim} \sqrt{n}$. It is natural to ask what happens for ambient sets other than intervals. For arithmetic progressions, the same result follows immediately from the homogeneity of Sidon's equation. For the union of two intervals $I_1$ and $I_2$, \cite{Robin} proves that
$$(0.876+o(1))\sqrt{n}\leqslant s(I_1\sqcup I_2) \leqslant (1+o(1))\sqrt{n},$$
where $n=|I_1\sqcup I_2|$. Beyond these cases, little seems to be known. For instance, what is the maximum size of a Sidon set contained in $n/2$ dominoes? In the first $n$ primes? Or in the first $n$ squares? For this last example, stronger bounds than the one obtained in the present article are known; see \cite[Section 6]{LefmannThiele}.

Erd\H{o}s conjectured that intervals of integers are the least favorable ambient sets to produce Sidon sets, since they have the largest additive energy. More precisely, if we define
\begin{equation}H(n)=\min\limits_{\substack{E\subset\Z \\ |E|=n}}\max\left\lbrace |S| : S\subset E, S \text{ is a Sidon set}\right\rbrace,\end{equation}
then he conjectured that $H(n) \underset{n \to \infty}{\sim} \sqrt{n}$. The upper bound follows immediately from Erd\H{o}s and Tur\'{a}n’s result. For a long time, the best lower bound for $H(n)$ was due to Mian and Chowla \cite{MianChowla}, who proved that $H(n)>cn^{1/3}$ for some positive constant $c$. This was improved 30 years later by Koml\'{o}s, Sulyok and Szemerédi \cite{Sulyok}, who partially proved the conjecture by obtaining the correct order of magnitude, namely $H(n)>(0.0000305+o(1))\sqrt n$. The proof proceeds through successive reductions via Euclidean division, carefully passing to suitable subsets of residues where Sidon's equation is preserved. The need to discard some residues at each step is what causes the losses in the constant. Fifteen years later, using the same general proof structure but incorporating a construction due to Singer, Abbott \cite{Abbott} further improved this bound to $H(n)>(0.0805+o(1))\sqrt n$. This had remained the best known result until now. The main result of this paper is the following:\\

\noindent\textbf{Theorem \ref{AmelioAbbott}.}
\textit{We have $$H(n)\geqslant\left(\frac{1}{3\sqrt 3}+o(1)\right)\sqrt n \gtrsim 0.19\sqrt n.$$}

This improvement is achieved by replacing the previous reduction steps with a single step, making use of the existence of a Freiman 2-morphism into a cyclic group.\\

We also extend this result to finite subsets of $\R^N$. For every $N\geqslant 1$, define
\begin{equation}F_N(n)=\min\limits_{\substack{E\subset\R^N \\ |E|=n}}\max\left\lbrace |S| : S\subset E, \ S \text{is a Sidon set}\right\rbrace.\end{equation} In this more general setting, we obtain the following:\\

\noindent\textbf{Theorem \ref{MainResult}.}
\textit{We have $$F_N(n)\geqslant\left(\frac{1}{3\sqrt 3}+o(1)\right)\sqrt n \gtrsim 0.19\sqrt n.$$}

It is interesting to note that this bound does not depend on the dimension $N$. This follows from a simple projection trick (see Lemma \ref{proj}) that implies that $F_N(n) = F_1(n)$. The connection between Theorem \ref{MainResult} and Theorem \ref{AmelioAbbott} is established by a Dirichlet approximation that is sufficiently fine to ensure that Sidon’s property is preserved. In particular, in Lemma \ref{Dirichlet} we prove that $F_1(n) = H(n)$.\\

Theorem \ref{MainResult} has several immediate applications. In particular, it implies that from any set of $n$ points in the plane one can always extract $\lfloor 0.19\sqrt{n}\rfloor$ points containing no parallelogram when $n$ is large enough. Indeed, $ABCD$ is a parallelogram if and only if $\overrightarrow{AB}=\overrightarrow{DC}$, equivalently $B-A=C-D$, which is forbidden in a Sidon set.
Such parallelogram-free point sets (a much-studied subject, see \cite{BrassMoserPach, Eppstein,ParallelogramFree} for example), also described as point sets determining only distinct vectors, appear naturally in questions of Erd\H{o}s, Hickerson and Pach \cite{ErHiPa} on distinct distances. 

For $N=1$, being a Sidon set is also equivalent to having all pairwise distances distinct. Thus Theorem \ref{MainResult} also shows that from any set of $n$ real numbers, one can always extract $\lfloor 0.19\sqrt{n}\rfloor$ real numbers determining only distinct distances when $n$ is large enough. This improves the previous best result, due to Abbott \cite{Abbott}. This problem was first posed by Erd\H{o}s and has been extensively studied (for example, see \cite{ErdosDistance1,ErdosDistance2,ErdosDistance3, dumiDistance, GuthKatz}). Very recently, Tao \cite{TaoDistance} solved one of Erd\H{o}s's favourite problems on distinct distances by combining distance questions with the exclusion of certain four-point configurations. This illustrates that Sidon-type conditions, and more generally constraints forbidding additive or geometric configurations of small size, remain closely connected to current developments in discrete geometry (see also \cite{dumiDistance2}).

In any dimension, the failure of the Sidon property is equivalent to the existence of two distinct pairs of points with the same midpoint. Thus Theorem \ref{MainResult} implies that from any set of $n$ points in $\mathbb{R}^N$, one can extract $\lfloor 0.19\sqrt n \rfloor$ points whose pairwise midpoints are all distinct when $n$ is large enough.\\

In the final section of this article, we extend Theorem \ref{MainResult} to $B_2[g]$ sets, which are defined by the fact that their pairwise sums can be represented by at most $g$ unordered pairs. By definition, Sidon sets are $B_2[1]$ sets. Defining \begin{equation}F_{g,N}(n)=\min\limits_{\substack{E\subset\R^N \\ |E|=n}}\max\left\lbrace |S| : S\subset E, S \text{ is a } B_2[g]\text{ set}\right\rbrace,\end{equation} we prove the following result:\\ 

\noindent \textbf{Theorem \ref{généB2}.}
For every fixed $N,g\geqslant 1$, we have
    $$F_{g,N}(n)\geqslant\left(\frac{\sqrt{g}}{3\sqrt 3}+o(1)\right)\sqrt n \gtrsim 0.19\sqrt{gn}.$$
    
To this end, we prove in particular a generalization of Singer's covering by Sidon sets to the $B_2[g]$ setting; see Proposition \ref{SingerB2}. We conclude the paper by explaining how the method can be adapted to other linear additive constraints beyond Sidon's equation.

\section{Large Sidon subsets}

\begin{theorem}\label{MainResult}
For every $N\geqslant 1$, we have
    $$F_N(n)\geqslant\left(\frac{1}{3\sqrt 3}+o(1)\right)\sqrt n \gtrsim 0.19\sqrt n,$$
    where
    $$F_N(n)=\min\limits_{\substack{E\subset\R^N \\ |E|=n}}\max\left\lbrace |S| : S\subset E, S \text{ is a Sidon set}\right\rbrace.$$
\end{theorem}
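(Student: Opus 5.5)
The plan is to reduce to the integer case in three stages: first from $\R^N$ to $\R$, then from $\R$ to $\Z$, and finally to prove the integer bound for $H(n)$ directly. So the proof of the present statement consists of the chain $F_N(n)\geqslant F_1(n)\geqslant H(n)$ followed by a lower bound on $H(n)$.

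\textbf{Step 1 (projection).} Given $E\subset\R^N$ with $|E|=n$, I would choose a linear form $\varphi:\R^N\to\R$ whose kernel avoids the finitely many nonzero vectors $x+y-z-t$ with $x,y,z,t\in E$. Such a form exists because a finite union of hyperplanes in the dual space cannot cover it. Then $\varphi$ is injective on $E$, and for $x,y,z,t\in E$ it satisfies $x+y=z+t$ if and only if $\varphi(x)+\varphi(y)=\varphi(z)+\varphi(t)$. Hence Sidon subsets of $E$ correspond exactly to Sidon subsets of $\varphi(E)$, which gives $F_N(n)\geqslant F_1(n)$.

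\textbf{Step 2 (Dirichlet approximation).} For a finite set $E\subset\R$, I would apply simultaneous Dirichlet approximation to the $n$ elements. This yields $q\in\N$ and integers $p_x$ with $|qx-p_x|<\varepsilon$ for all $x\in E$, where $\varepsilon<\delta q/4$ and $\delta$ is the minimum of $|x+y-z-t|$ over the nonzero values of this expression. Since $q$ is at least $1$, it suffices to take $\varepsilon<\delta/4$. Then $p_x+p_y-p_z-p_t$ is within $4\varepsilon$ of $q(x+y-z-t)$. It therefore vanishes exactly when $x+y-z-t$ vanishes, because in the nonzero case $|q(x+y-z-t)|\geqslant q\delta>4\varepsilon$. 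This also gives injectivity of $x\mapsto p_x$. So $x\mapsto p_x$ preserves and reflects Sidon's equation, and $F_1(n)\geqslant H(n)$.

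\textbf{Step 3 (the integer case).} This is the heart of the argument, and I expect the compression step here to be the main obstacle. Given $A\subset\Z$ with $|A|=n$, I would choose a prime power $q$ with $m=q^2+q+1$ of suitable size. By Singer's construction, $\Z/m\Z$ is partitioned (covered) by translates of, or disjoint dilates of, a Sidon set of size $q+1$. Suppose we have a subset $A'\subset A$ together with a map $A'\to\Z/m\Z$ that is an injective Freiman $2$-isomorphism onto its image, meaning it preserves $x+y=z+t$ in both directions. Then pigeonhole over the $m/(q+1)\approx q$ Sidon pieces gives a piece meeting the image of $A'$ in at least $|A'|(q+1)/m\approx |A'|/q$ points. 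Pulling that piece back gives a Sidon subset of $A$.

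The compression lemma should produce $A'$ by reducing modulo a suitable modulus. I would take a random dilation $x\mapsto \lambda x \bmod p$ for a large prime $p$, and keep those points that land in an interval $I$ of length about $p/3$. Then sums of two such points lie in an interval of length less than $p$, so no wraparound occurs and the reduction is a Freiman $2$-isomorphism on the kept points. The image is a set of integers inside an interval of length $L$, and reducing it mod $m\geqslant 2L$ again preserves Sidon's equation.

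Optimizing, the interval of length about $p/3$ retains $|A'|\approx n/3$ points on average, and the interval length forces $m\approx$ proportional to $n$. The resulting Sidon set then has size about $|A'|\sqrt{1/m}$-scaled. Balancing these choices should give the constant $1/(3\sqrt3)$: roughly, keeping a fraction $\theta$ of the points in an interval whose image needs $m\approx\theta\cdot$const, and optimizing $\theta$ yields the factor $1/3$ and the $\sqrt3$. The exact tuning of the interval length against $m$, with $o(1)$ losses from the density of prime powers, is the delicate part.
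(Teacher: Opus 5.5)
Your Steps 1 and 2 match the paper's reductions (Lemmas \ref{proj} and \ref{Dirichlet}). There, only the forward implication $x+y=z+t\Rightarrow$ image equation, together with injectivity, is needed. In Step 2 you must also impose $4\varepsilon<1$ (e.g.\ $\varepsilon<\min(1,\delta)/4$): when $x+y-z-t=0$ you only know $|p_x+p_y-p_z-p_t|<4\varepsilon$, and $\delta$ may exceed $1$.

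The genuine gap is in Step 3, which carries the whole bound: your compression cannot produce a modulus $m$ of order $n$. To make $x\mapsto\lambda x\bmod p$ injective on an arbitrary $A$, $p$ must avoid every prime factor of every difference, so in practice $p>\operatorname{diam}A$. An interval of length $L\approx p/3$ then has length comparable to $\operatorname{diam}A$. Insisting on $m\geqslant 2L$ makes $m$ at least of that order, which is unbounded in terms of $n$. The pigeonhole over the $q+1\approx\sqrt m$ Singer pieces then gives only $|A'|/\sqrt m\to0$. If instead you take $p\asymp n$, then $\lambda x\equiv\lambda y\bmod p$ iff $p\mid x-y$, independently of $\lambda$. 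By the Chinese remainder theorem, $A$ can be chosen to occupy only two residues modulo every prime of that size. So the claim that ``the interval length forces $m$ proportional to $n$'' is unjustified, and your heuristic for $1/(3\sqrt3)$ is not attached to any working mechanism.

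The missing idea is to stop demanding a collision-free map through the choice of parameters. Take $m=cn$ directly, use a map into $\Z/m\Z$ that is merely a Freiman $2$-homomorphism (forward direction only), and delete collisions on average. The paper uses $\phi_\theta(a)=\lfloor am\theta\rfloor\bmod m$ on $B_\theta=\{a\in A:\{am\theta\}<1/2\}$. No carry occurs in sums of two elements, and $\int_0^1|B_\theta|\,\mathrm d\theta\geqslant n/2$. A collision between $b\neq b'$ forces $\mathrm{dist}((b-b')\theta,\Z)\leqslant 1/(2m)$, a set of measure $1/m$, so the expected number of colliding pairs is at most $n^2/(2m)$. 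Keeping one element per fibre gives $|C|\geqslant n/2-n^2/(2m)=(\tfrac12-\tfrac1{2c})n$. Singer's covering by $q+1\approx\sqrt{cn}$ Sidon sets then yields $\frac{c-1}{2c\sqrt c}\sqrt n$, which is maximal at $c=3$. Thus the $3$ is the ratio $m/n$, and the $n/3$ surviving points are $n/2$ kept minus $n/6$ lost to collisions, not the content of an interval of length $p/3$.

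Your mod-$p$ route can be repaired in the same spirit. Keep residues in $[0,p/2)$, reduce their integer lifts modulo $m=cn$ without requiring $m\geqslant 2L$, and check that a fixed pair collides with probability about $1/m$ over random $\lambda$.
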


The proof of this result is divided into two steps. The first and most important one is to establish the result for finite subsets of $\Z$; then, for the second step, we reduce to this setting by using a projection trick from $\R^N$ to $\R$ and then an approximation by rationals, so that by clearing denominators we are back in the integers.

\subsection{Step 1: Result in $\Z$}

We begin by proving the result for finite subsets of $\Z$.

\begin{theorem}\label{AmelioAbbott}
    We have
     $$H(n)\geqslant\left(\frac{1}{3\sqrt 3}+o(1)\right)\sqrt n \gtrsim 0.19\sqrt n,$$
    where
    $$H(n)=\min\limits_{\substack{E\subset\Z \\ |E|=n}}\max\left\lbrace |S| : S\subset E, S \text{ is a Sidon set}\right\rbrace.$$
\end{theorem}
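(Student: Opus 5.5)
The plan is to reduce the problem to a cyclic group in which Singer's construction supplies Sidon sets covering the whole group, and then pull a Sidon set back to $\Z$. Fix an $n$-element set $E\subset\Z$. The first step is a compression lemma. It should produce a subset $E'\subset E$ and a modulus $m$ together with an injective map $\varphi:E'\to\Z/m\Z$ that is a Freiman $2$-isomorphism onto its image. Concretely, $\varphi(a)+\varphi(b)=\varphi(c)+\varphi(d)$ in $\Z/m\Z$ should force $a+b=c+d$ in $\Z$. The quantitative target is
$$|E'|\geqslant \left(\tfrac13+o(1)\right)n \quad\text{and}\quad m\leqslant (3+o(1))\,n .$$
With these numbers the final bound is $\frac{n/3}{\sqrt{3n}}=\frac{\sqrt n}{3\sqrt3}$. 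The only direction needed is that $\varphi$ reflects additive coincidences. The converse direction is automatic for a reduction-type map.

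The second step is the Singer covering. Choose a prime power $q$ with $m':=q^2+q+1\geqslant m$ and $m'=(1+o(1))m$; this is possible by prime gaps. Embedding $\Z/m\Z$ into $\Z/m'\Z$ would not preserve sums, so instead I would run the compression lemma directly with a modulus of the form $q^2+q+1$. The lemma should therefore be proved for every sufficiently large $m$ in the stated range, not only for one specific $m$. Singer's theorem gives a Sidon set $D\subset \Z/m'\Z$ with $|D|=q+1$ whose translates $D+t$, $t\in\Z/m'\Z$, cover each element exactly $q+1$ times. Double counting gives
$$\sum_t |\varphi(E')\cap(D+t)|=(q+1)|E'|,$$
so some translate satisfies $|\varphi(E')\cap (D+t)|\geqslant (q+1)|E'|/m'\sim |E'|/\sqrt{m'}$. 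A translate of a Sidon set is Sidon. Because $\varphi$ reflects additive relations and is injective, the preimage $S=\varphi^{-1}(D+t)$ is a Sidon subset of $E$. Indeed, $a+b=c+d$ in $S$ maps to a relation in $D+t$, which forces $\{a,b\}=\{c,d\}$ after injectivity. This gives $|S|\geqslant (\frac{1}{3\sqrt3}+o(1))\sqrt n$.

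The main obstacle is the compression lemma itself. My first attempt would use a random dilation. Take a large prime $p>4\max|E|$, so that $E\to\Z/p\Z$ is a $2$-isomorphism. Multiply by a random $\lambda\in(\Z/p\Z)^*$ and look at the dilated set, using the fact that elements lying in an interval of length $L\leqslant p/2$ remain $2$-isomorphic to a subset of $[0,L)\subset\Z$. Such a subset then embeds $2$-isomorphically into $\Z/m\Z$ when $m\geqslant 2L-1$. The difficulty is density: a window of length $L$ only captures about $nL/p$ points on average. So the construction must exploit concentration, taking $L$ comparable to $n$ while retaining a constant proportion of the points. Here I would try reducing modulo a suitable $m\approx 3n$ and choosing a residue window of length about $m/3$. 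The aim is to keep $\geqslant n/3$ elements on which the only possible wrap-around coincidences $a+b\equiv c+d$ with $a+b\neq c+d$ are excluded, because all pairwise sums fall into a range of width less than $m$. The factor $3$ should arise from this trade-off between the window length and the sum range. I expect that pinning down this step, and checking that the losses are exactly the constants $1/3$ and $3$, is where the real work lies.
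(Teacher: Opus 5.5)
Your second step (Singer set in $\Z/(q^2+q+1)\Z$, averaging, pullback through an injective map) is sound and is essentially the paper's argument. The gap is the compression lemma. You do not prove it, and in the form you state it, it is false.

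You ask for an injective $\varphi:E'\to\Z/m\Z$ that \emph{reflects} additive coincidences, with $|E'|\geqslant(\frac13+o(1))n$ and $m\leqslant(3+o(1))n$. Take $E=\{2,4,\dots,2^n\}$, which is a Sidon set in $\Z$. If $\varphi(a)+\varphi(b)=\varphi(c)+\varphi(d)$ forces $a+b=c+d$, hence $\{a,b\}=\{c,d\}$, then $\varphi(E')$ is a Sidon set in $\Z/m\Z$. Its $|E'|(|E'|-1)$ nonzero differences are then distinct, so $|E'|<\sqrt m+1=O(\sqrt n)$. Thus no construction aiming at a genuine $2$-isomorphism can keep a positive proportion of an arbitrary $E$; this includes your dilation-and-window idea, which tries to force all sums into a range of width less than $m$.

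You also never use this direction. In checking that $\varphi^{-1}(D+t)$ is Sidon, you start from $a+b=c+d$ in $\Z$ and push it \emph{forward} to $\varphi(a)+\varphi(b)=\varphi(c)+\varphi(d)$. What is needed is only an injective Freiman $2$-homomorphism.

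That weaker object is still not automatic. The only homomorphisms $\Z\to\Z/m\Z$ are $a\mapsto\lambda a$, and all of them collapse $E$ to a point if $E\subset m\Z$. The paper's Lemma~\ref{Freiman} gets around this with a \emph{real} dilation. For $\theta\in[0,1[$ it sets $\phi_\theta(a)=\lfloor am\theta\rfloor\bmod m$ and keeps only $B_\theta=\{a\in A:\{am\theta\}<\frac12\}$. On $B_\theta$ no carry occurs, so $\phi_\theta(b)+\phi_\theta(b')=\phi_\theta(b+b')$ and $\phi_\theta$ preserves relations.

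The counting then goes as follows:
\begin{itemize}
\item Since $\theta\mapsto k\theta\bmod 1$ preserves Lebesgue measure for $k\neq 0$, the average of $|B_\theta|$ over $\theta$ is at least $n/2$.
\item A collision $\phi_\theta(b)=\phi_\theta(b')$ with $b\neq b'$ in $B_\theta$ forces $\mathrm{dist}((b-b')\theta,\Z)\leqslant\frac1{2m}$, an event of measure at most $\frac1m$.
\item Keeping one element per fibre loses at most the number of colliding pairs. This gives $C$ with $|C|\geqslant\frac n2-\frac{n^2}{2m}$ for a suitable $\theta$.
\item Taking $m\sim cn$ yields a Sidon subset of size $\frac{c-1}{2c\sqrt c}\sqrt n$, maximised at $c=3$.
\end{itemize}
So your target constants $\frac13$ and $3$ are right. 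They come from $\frac12-\frac16$ (no-carry proportion minus expected collision loss), not from a trade-off between a window length and a sum range.
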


The proof of this Theorem relies on Lemma \ref{Freiman} below. Recall that a Freiman $2$-morphism is a map $\phi$ between two subsets $X$ and $Y$ of abelian groups such that, for all $x,y,z,t\in X$,
$$x+y=z+t \Rightarrow \phi(x)+\phi(y)= \phi(z)+\phi(t).$$
This notion allows the Sidon data to be transported from $X$ to $Y$. Lemma \ref{Freiman} replaces the reduction Lemmas A to D from \cite{Abbott}.

\begin{lem}\label{Freiman}
    Let $A\subset\Z$ be finite and let $m\in\N^*$. There exists $C\subseteq A$ with $|C|\geqslant \frac{|A|}{2}-\frac{|A|^2}{2m}$ and an injective Freiman $2$-morphism from $C$ to $\Z/m\Z$.
\end{lem}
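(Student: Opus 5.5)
The plan is to build the map from a random dilation, to cut out half of the circle so that no sum wraps around, and to remove collisions by deletion. First a remark on what must be checked. A homomorphism restricted to $C$ is automatically a Freiman $2$-morphism. If it is also injective on $C$, then any Sidon set $S$ in the image pulls back to a Sidon subset of $C$: from $x+y=z+t$ we get $\phi(x)+\phi(y)=\phi(z)+\phi(t)$, hence $\{\phi(x),\phi(y)\}=\{\phi(z),\phi(t)\}$, hence $\{x,y\}=\{z,t\}$. So the real content is to find a large $C\subseteq A$ on which some additive map into $\Z/m\Z$ is injective. The factor $\tfrac12$ comes from restricting to a half-circle, and the term $\tfrac{|A|^2}{2m}$ from deleting one element of each colliding pair.

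\textbf{Step 1: an additive map to $\R/\Z$.} Fix a prime $p>2\max_{a,b\in A}|a-b|$, and pick $k\in(\Z/p\Z)^*$ uniformly at random. Put $x_a=ka \bmod p$, taking the representative in $[0,p)$. Reduction mod $p$ is injective on $A$, and multiplication by $k$ is a bijection, so the $x_a$ are pairwise distinct. A rotation (translation by a random $r$) is harmless: relations $x+y=z+t$ have balanced coefficients, so translations preserve them. By averaging over $r$, we may therefore keep a subset $A'$ with $|A'|\geqslant |A|/2$ whose values $x_a$ all lie in an arc of length $p/2$. After translating, these values lie in $[0,p/2)$. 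On $A'$ the map $a\mapsto x_a\in\Z$ is then itself a Freiman $2$-morphism into $\Z$: if $x+y=z+t$ in $A$, then $x_x+x_y\equiv x_z+x_t \pmod p$, and both sides lie in $[0,p)$, so they are equal as integers.

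\textbf{Step 2: pass to $\Z/m\Z$ and count collisions.} Compose with reduction mod $m$, that is, $\phi(a)=x_a \bmod m$. This is a Freiman $2$-morphism on $A'$, since it is an integer-valued $2$-morphism followed by a homomorphism. It remains to make $\phi$ injective. Two elements $a\neq b$ of $A'$ collide exactly when $x_a\equiv x_b\pmod m$. Over the randomness in $k$, the difference $x_a-x_b\equiv k(a-b)$ is uniform on the nonzero residues mod $p$. The probability that it is $\equiv 0 \pmod m$ is therefore about $1/m$, with an error of order $1/p$ that vanishes as $p\to\infty$. Combined with the choice of half-circle, the expected number of colliding pairs inside the retained half is at most about $\tfrac12\binom{|A|}{2}\cdot\tfrac1m\leqslant \tfrac{|A|^2}{4m}$. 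Deleting one element from each colliding pair leaves $C$ with $|C|\geqslant \tfrac{|A|}{2}-\tfrac{|A|^2}{2m}$, with room to spare. A direct double-counting over all pairs $(k,r)$ then gives a single good choice.

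\textbf{Main obstacle.} The delicate step is the joint averaging, which must keep the size term at $|A|/2$ while charging only a $1/m$-fraction of pairs as collisions. It also requires that the collision probability really is $\leqslant 1/m+O(1/p)$ uniformly in the pair. I would handle this by letting $p\to\infty$ along primes, so the error terms disappear, and by making the collision count linear in the random data so that expectation suffices. If the integer-rounding bookkeeping in Step 2 turns out to be awkward for composite $m$, the fallback is to take $m$ prime, which is enough for the application to Singer sets with $m=q^2+q+1$ after a harmless adjustment of $m$.
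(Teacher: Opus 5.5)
Your argument is correct, and it implements the same ``dilate, keep half, delete collisions, average'' strategy along a genuinely different route.

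\textbf{How the two routes differ.} The paper uses a continuous dilation $\theta\in[0,1[$ and the map $a\mapsto\lfloor am\theta\rfloor \bmod m=\lfloor m\{a\theta\}\rfloor$. That map records which of $m$ arcs of the circle contains $a\theta$. Carries are avoided at this fine scale by requiring $\{am\theta\}<\frac12$, the averaging rests on Lemma~\ref{preservMesure}, and a collision forces $\mathrm{dist}((b-b')\theta,\Z)\leqslant\frac1{2m}$.

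You instead embed $A$ in $\Z/p\Z$ by a random dilation $k$. You avoid wrap-around at the coarse scale by keeping a rotated half-arc $\{0,\dots,\frac{p-1}2\}$, so that the lift to $\Z$ is a $2$-morphism. Only then do you reduce mod $m$, which is an honest homomorphism. Everything is finite counting, with no measure theory. The cost is an auxiliary prime and a two-parameter average, whereas the paper needs one parameter.

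\textbf{Step 2 should be stated more carefully, and doing so helps you.} A residue mod $p$ has no residue mod $m$. The right object is the representative $c_k\in[-\frac{p-1}2,\frac{p-1}2]$ of $k(a-b)$, which is exactly $x_a-x_b$ when $a,b\in A'$. Since $k\mapsto c_k$ is a bijection onto the nonzero integers of that interval, $\Pr(m\mid c_k)\leqslant\frac1m$ holds exactly. So no $O(1/p)$ error term and no limit $p\to\infty$ are needed.

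\textbf{The rotation genuinely halves the collision term.} The event $\{m\mid c_k\}$ depends only on $k$, while $\Pr(a\in A'\mid k)=\frac{p+1}{2p}$ for every $k$, thanks to the independent rotation $r$. Hence
\[
\Pr\bigl(a,b\in A',\ \phi(a)=\phi(b)\bigr)\leqslant\frac{p+1}{2pm},
\]
and the average of $|A'|$ minus the number of colliding pairs is at least $\frac{p+1}{2p}\bigl(n-\frac{n(n-1)}{2m}\bigr)$. This gives a single $(k,r)$ with $|C|\geqslant\frac n2-\frac{n^2}{4m}$; the case where the right side is negative is trivial. The paper, by contrast, bounds $\lambda(F_{b,b'})$ using the collision condition alone.

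If you choose $r$ depending on $k$, as your Step 1 phrasing suggests, you still get exactly the stated lemma, since $\Pr(m\mid c_k)\leqslant\frac1m$ alone suffices. Fed into the optimisation of Theorem~\ref{AmelioAbbott}, the stronger bound would give $\max_c\frac{2c-1}{4c^{3/2}}=\frac{\sqrt2}{3\sqrt3}$, attained at $c=\frac32$.

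\textbf{Two small remarks.}
\begin{itemize}
\item Your opening remark about restricted homomorphisms does not apply to your $\phi$. That does no harm, because you verify the $2$-morphism property directly via the half-arc.
\item The fallback to prime $m$ is unnecessary, and it would not be harmless. The lemma is stated for every $m$, and the Singer step needs $m=q^2+q+1$ exactly, which is often composite (e.g.\ $q=4$ or $q=7$). Nothing in your argument uses primality of $m$, so drop it.
\end{itemize}
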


For the proof of this lemma, we shall need a small, well-known intermediate lemma.

\begin{lem}\label{preservMesure}
    Let $E\subseteq[0,1[$ be a Borel set, let $k\in\Z^*$, and let $T_k:x\mapsto kx\mod 1$. Then $\lambda(E)=\lambda(T_k^{-1}(E)),$ where $\lambda$ denotes Lebesgue measure.
\end{lem}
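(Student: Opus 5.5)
The plan is to prove Lemma \ref{preservMesure} first for intervals, then extend to all Borel sets by a uniqueness-of-measures argument. By symmetry we may assume $k\geqslant 1$. Indeed, for $k<0$ we can write $T_k = T_{|k|}\circ R$, where $R:x\mapsto -x \mod 1$. The map $R$ agrees with $x\mapsto 1-x$ on $]0,1[$ and fixes $0$, so it preserves $\lambda$ on $[0,1[$. Hence it suffices to treat $k\geqslant 1$ (the case $k=-1$ is exactly the invariance under $R$).

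\textbf{Intervals.} Fix $k\geqslant 1$ and $0\leqslant a<b\leqslant 1$. We compute $T_k^{-1}([a,b[)$ explicitly. A point $x\in[0,1[$ satisfies $kx\mod 1\in[a,b[$ if and only if $kx\in[j+a,j+b[$ for some integer $j$ with $0\leqslant j\leqslant k-1$. Therefore
$$T_k^{-1}([a,b[)=\bigsqcup_{j=0}^{k-1}\left[\tfrac{j+a}{k},\tfrac{j+b}{k}\right[.$$
These $k$ intervals are pairwise disjoint, each has length $\frac{b-a}{k}$, and together they have total measure $b-a=\lambda([a,b[)$.

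\textbf{Extension to Borel sets.} Define $\mu(E)=\lambda(T_k^{-1}(E))$ for Borel $E\subseteq[0,1[$. The map $T_k$ is Borel measurable (it is piecewise affine), so $\mu$ is a well-defined finite measure on $[0,1[$. Preimages commute with countable disjoint unions, which gives $\sigma$-additivity. By the interval step, $\mu$ and $\lambda$ agree on the family of half-open intervals $[a,b[$. This family is a $\pi$-system generating the Borel $\sigma$-algebra of $[0,1[$, and the two measures have the same finite total mass $\mu([0,1[)=1=\lambda([0,1[)$. The $\pi$-$\lambda$ (Dynkin) uniqueness theorem then gives $\mu=\lambda$ on all Borel sets, which is the claim.

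\textbf{Main obstacle.} Nothing here is deep. The only points needing care are the exact description of the preimage, including the endpoint conventions of the half-open intervals, and checking the hypotheses of the uniqueness theorem (a $\pi$-system, equal finite total mass). The reduction from negative $k$ is routine once the invariance of $\lambda$ under $x\mapsto -x\mod 1$ is noted.
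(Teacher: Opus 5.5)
Your proof is correct and follows the same route as the paper: you write the preimage of an interval as $k$ disjoint intervals of length $\lambda(I)/k$, extend to Borel sets, and reduce negative $k$ to positive $k$ via the reflection $x\mapsto -x \mod 1$. Your version is slightly more careful than the paper's in two places: you fix the endpoint conventions for half-open intervals, and you justify the extension with the $\pi$-$\lambda$ uniqueness theorem, where the paper only says that intervals generate the Borel sets.
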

\begin{proof}
    Let $I$ be an interval of $[0,1[$. If $k=1$, there is nothing to prove. Let $k\geqslant 2$. We have
    $$T_k^{-1}(I)=\bigcup_{j=0}^{k-1}\frac{I+j}{k} \mod 1,$$
    in other words, $T_k^{-1}(I)$ is the union of $k$ disjoint intervals, each of length $\lambda(I)/k$. 
   
    \begin{center}
        \begin{tikzpicture}[scale=2]
        \tkzDefPoint(0,0){O}\tkzDefPoint(0,1){A}\tkzDefPoint(1,0){B}\tkzDefPoint(40:1){C}
        \tkzDefPoint(160:1){I1}
        \tkzDefPoint(130:1){U}
        \tkzDefPoint(20:1){I2}
            \tkzDrawCircle[black](O,A)
            \tkzDrawArc[very thick](O,I2)(I1)
            \tkzLabelPoint[below=0.1](A){$0$}
             \tkzLabelPoint[above left](U){$I$}
            \tkzMarkArc[mark=|](O,I2,I1)
        \end{tikzpicture}
   \hspace{2cm}
        \begin{tikzpicture}[scale=2]
        \tkzDefPoint(0,0){O}\tkzDefPoint(0,1){A}\tkzDefPoint(1,0){B}\tkzDefPoint(40:1){C}
         \tkzDrawCircle[black](O,A)
        \tkzDefPoint(100:1){I1}
        \tkzDefPoint(80:1){I2}
            \tkzDrawArc[very thick](O,I2)(I1)
            \tkzMarkArc[mark=|](O,I2,I1)
             \tkzLabelPoint[below=0.1](A){$0$}
              \tkzDefPoint(151.43:1){I3}
        \tkzDefPoint(131.43:1){I4}
            \tkzDrawArc[very thick](O,I4)(I3)
                \tkzDefPoint(202.86:1){I5}
        \tkzDefPoint(182.86:1){I6}
            \tkzDrawArc[very thick](O,I6)(I5)
                \tkzDefPoint(254.29:1){I7}
        \tkzDefPoint(234.29:1){I8}
            \tkzDrawArc[very thick](O,I8)(I7)
                \tkzDefPoint(305.72:1){I9}
        \tkzDefPoint(285.72:1){I10}
            \tkzDrawArc[very thick](O,I10)(I9)
                \tkzDefPoint(357.15:1){I11}
        \tkzDefPoint(335.15:1){I12}
            \tkzDrawArc[very thick](O,I12)(I11)
                \tkzDefPoint(408.58:1){I13}
        \tkzDefPoint(388.58:1){I14}
            \tkzDrawArc[very thick](O,I14)(I13)
           \tkzLabelPoint[above right](I13){$T_k^{-1}(I)$}
        \end{tikzpicture}
    \end{center}

    Thus $\lambda(T_k^{-1}(I))=k\frac{\lambda(I)}{k}=\lambda(I)$. Since intervals generate the Borel sets, we obtain $\lambda(T_k^{-1}(E))=\lambda(E)$ for every Borel set $E$ and every $k\geqslant 1$.
    The case $k\leqslant -1$ follows by composing with the symmetry $k\mapsto -k$, which also preserves Lebesgue measure.
\end{proof}



We can now begin the proof of Lemma \ref{Freiman}.

\begin{proof}[Proof of Lemma \ref{Freiman}]
    Let $A\subset\Z$ be finite and write $|A|=n$. Let also $m\in\Z_{> 0}$ and $\theta\in [0,1[$. For $a \in A$, set
    $$\phi_\theta (a)=\left\lfloor am\theta\right\rfloor \mod m,$$
    and
    $$B_\theta =\left\lbrace a\in A : \left\lbrace am\theta\right\rbrace<\frac12   \right\rbrace,$$
    where $\left\lfloor \cdot \right\rfloor$ denotes the integer part and $\left\lbrace \cdot \right\rbrace$ the fractional part of a real number. For all $b,b'\in B_\theta$, on the one hand we have
    $$\phi_\theta(b)+\phi_\theta(b')=\left\lfloor bm\theta\right\rfloor+\left\lfloor b'm\theta\right\rfloor \mod m.$$
    On the other hand, since $x=\left\lfloor x\right\rfloor+\left\lbrace x\right\rbrace$ for every real number $x$, we have
      \begin{align*}\phi_\theta(b+b')&=\left\lfloor (b+b')m\theta\right\rfloor \mod m\\ &=\left\lfloor \left\lfloor bm\theta\right\rfloor+\left\lfloor b'm\theta\right\rfloor+\left\lbrace bm\theta\right\rbrace+\left\lbrace b'm\theta\right\rbrace\right\rfloor \mod m\\ &= \left\lfloor bm\theta\right\rfloor+\left\lfloor b'm\theta\right\rfloor+\left\lfloor\left\lbrace bm\theta\right\rbrace+\left\lbrace b'm\theta\right\rbrace\right\rfloor \mod m\\ &= \left\lfloor bm\theta\right\rfloor+\left\lfloor b'm\theta\right\rfloor\mod m,\end{align*}

because $0 \le \left\lbrace bm\theta\right\rbrace+\left\lbrace b'm\theta\right\rbrace< 1 $, and therefore $\left\lfloor\left\lbrace bm\theta\right\rbrace+\left\lbrace b'm\theta\right\rbrace\right\rfloor=0$. Thus
 $$\phi_\theta(b)+\phi_\theta(b')= \phi_\theta(b+b'),$$ and so $\phi_\theta$ is indeed a Freiman $2$-morphism on $B_\theta$. We shall now reduce $B_\theta$ in order to ensure injectivity.
 
 We first bound from below the "expectation" of $|B_\theta|$. For every $a\in A$, set
 $$E_a=\left\lbrace \theta\in [0,1[ \ : \left\lbrace am\theta \right\rbrace <1/2\right\rbrace.$$
 Thus $|B_\theta|=\sum_{a\in A} \1_{E_a}(\theta)$ and
 $$\int_0^1 |B_\theta| \,\dd \theta=\sum_{a\in A} \int_0^1 \1_{E_a}(\theta) \,\dd \theta=\sum_{a\in A} \lambda (E_a).$$
Now, if $a\neq 0$, the map $\theta\mapsto ma\theta\mod 1$ preserves Lebesgue measure on the torus $\R/\Z$ by Lemma \ref{preservMesure}. Hence $\lambda(E_a)=1/2$. Moreover $E_0=[0,1[$, so $\lambda(E_0)=1$ and in particular we obtain
\begin{equation}\label{EspBteta}
    \int_0^1 |B_\theta| \,\dd \theta\geqslant \frac{n}{2}.
\end{equation}

 We now bound from above the expectation of the number of collisions through $\phi_{\theta}$. For all distinct $b,b'\in A$, set
 $$F_{b,b'}=\left\lbrace \theta\in [0,1[ \ : b,b'\in B_\theta \ , \ \phi_\theta(b)=\phi_\theta(b') \right\rbrace.$$
 We have $\lambda(F_{b,b'})\leqslant 1/m$. Indeed, if $b,b'\in B_\theta$, then we can write
 $$bm\theta = u+\alpha \quad \text{and} \quad b'm\theta=v+\beta,$$
 where $u,v\in\Z$ and $\alpha,\beta\in[0,1/2[$. Since $\phi_\theta(b)=\phi_\theta(b')$, we have $u \equiv v \mod m$, and hence $u-v\in m\Z$. Subtracting, we obtain $(b-b')m\theta=(u-v)+(\alpha-\beta)$, and therefore $(b-b')\theta\in \Z+\frac{\alpha-\beta}{m}$. Since $|\alpha-\beta|<1/2$, we get $\text{dist}((b-b')\theta,\Z)\leqslant \frac{1}{2m}$. Thus $$F_{b,b'}\subseteq \left\lbrace \theta\in [0,1[ \ : \text{dist}((b-b')\theta, \Z)\leqslant \frac{1}{2m} \right\rbrace,$$
 and since $b-b'\neq 0$, the map $\theta\mapsto (b-b')\theta \mod 1$ preserves Lebesgue measure by Lemma \ref{preservMesure}, hence
 $$\lambda(F_{b,b'})\leqslant\lambda\left( \left\lbrace x\in [0,1[ \ : \text{dist}( x, \Z)\leqslant \frac{1}{2m} \right\rbrace \right)=\frac{1}{m}.$$
 We now define $P_\theta$ as the number of unordered pairs the same fiber of $\phi_{\theta}$:
 \begin{align*}
     P_\theta & =|\left\lbrace \left\lbrace b,b'\right\rbrace\subset B_\theta :  b\neq b', \phi_\theta(b)=\phi_\theta(b')\right\rbrace | \\
     & = \sum\limits_{\substack{\left\lbrace b,b'\right\rbrace\subset A \\  b\neq b'}} \1_{F_{b,b'}}(\theta).
 \end{align*}
 We therefore have
 $$\int_0^1 P_\theta\,\dd \theta=\int_0^1 \sum\limits_{\substack{\left\lbrace b,b'\right\rbrace\subset A \\  b\neq b'}} \1_{F_{b,b'}}(\theta)\,\dd \theta = \sum\limits_{\substack{\left\lbrace b,b'\right\rbrace\subset A \\  b\neq b'}} \lambda(F_{b,b'})\leqslant \binom{n}{2}\frac{1}{m}.$$
 
 This gives the following upper bound for the expectation of the number of collisions:
 \begin{equation}\label{espColli}
     \int_0^1 P_\theta\,\dd \theta\leqslant\frac{n^2}{2m}.
 \end{equation}
This bound will allow us to control the number of elements that must be removed from $B_\theta$ in order to make $\phi_\theta$ injective. Indeed, let $f_1,\dots,f_t$ be the list of the $t$ non-empty fibres ($t\leqslant m$) of $\phi_{\theta}$ restricted to $B_{\theta}$, and let $r_1,\dots,r_t$ be their respective sizes. 
By choosing one element in each of these $t$ fibres, we obtain a subset $C_\theta\subseteq B_\theta$ of cardinality $t$ on which $\phi_\theta$ is injective. Moreover, since $r_1+\dots+r_t=|B_\theta|$, we have
$$|B_\theta\setminus C_\theta|=|B_\theta|-|C_\theta|=\sum_{i=1}^t (r_i-1).$$
On the other hand, $P_\theta=\displaystyle \sum_{i=1}^t\binom{r_i}{2}$ and $\binom{r_i}{2}\geqslant (r_i-1)$ for every $r_i\geqslant 1$, so
$|B_\theta|-|C_\theta|\leqslant P_\theta$, from which we get $$|C_\theta|\geqslant |B_\theta|-P_\theta.$$ Using \eqref{EspBteta} and \eqref{espColli}, we obtain
$$\int_0^1|C_\theta| \,\dd \theta\geqslant \int_0^1(|B_\theta|-P_\theta)\,\dd \theta\geqslant \frac{n}{2}-\frac{n^2}{2m}.$$
Finally, there exists $\theta\in [0,1[$ such that $|C_\theta|\geqslant \frac{n}{2}-\frac{n^2}{2m}$ and such that $\phi_\theta$ is an injective Freiman $2$-morphism from $C_\theta$ to $\Z/m\Z$, which concludes the proof.
\end{proof}

We shall combine Lemma \ref{Freiman} with the Singer covering trick (see Lemma \ref{LemmeSinger} below and \cite{Singer} for its very elegant proof based on the existence of a collineation in the projective plane) in order to prove Theorem \ref{AmelioAbbott}.

\begin{lem}[Singer]\label{LemmeSinger}
    Let $q$ be a prime power. There exist $q+1$ Sidon sets, each of cardinality $q+1$, which cover $\Z/(q^2+q+1)\Z$.
\end{lem}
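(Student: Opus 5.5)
The plan is to use Singer's difference set together with the translation action of $\Z/(q^2+q+1)\Z$ on itself. Set $m=q^2+q+1$. The classical theorem of Singer, which is the input I would take from \cite{Singer}, gives a set $D\subset \Z/m\Z$ with $|D|=q+1$ that is a \emph{perfect difference set}. This means every nonzero residue $r$ has exactly one representation $r=d-d'$ with $d,d'\in D$.

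Such a $D$ is a Sidon set in $\Z/m\Z$. Indeed, suppose $x+y=z+t$ with $x,y,z,t\in D$. Then $x-z=t-y$. If this common difference is nonzero, uniqueness of the representation forces $x=t$ and $z=y$. If it is zero, then $x=z$ and $y=t$. In both cases the solution is trivial. The Sidon property is also invariant under translation, so every translate $D+j$ is again a Sidon set of cardinality $q+1$.

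To build the covering I would use the projective-plane picture behind Singer's construction. Identify $\mathbb F_{q^3}^*/\mathbb F_q^*$, a cyclic group of order $m$, with the points of $\mathrm{PG}(2,q)$. Under this identification $D$ and its translates are exactly the lines, so the $m$ translates $D+j$ are the $m$ lines of the plane. Through any fixed point there pass exactly $q+1$ lines, and every other point lies on exactly one of them. So I would fix a point $x_0$ and take the $q+1$ lines through it. Their union is the whole plane, and they pairwise meet only at $x_0$.

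In additive language, the $q+1$ translates $D+j$ with $j\in x_0-D$ all contain $x_0$. Each has $q+1$ elements and any two share only $x_0$, since distinct lines meet in exactly one point. Counting, their union has $1+(q+1)q=m$ elements, so it covers $\Z/m\Z$.

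The alternative is to argue purely combinatorially from the perfect difference property. For $y\neq x_0$, write $y-x_0=d-d'$ with $d,d'\in D$. Then $y=x_0-d'+d$ lies in $D+j$ for $j=x_0-d'\in x_0-D$, which is one of the chosen translates. The point $x_0$ itself lies in all of them. This gives the covering directly, without the geometric language.

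The main obstacle is the existence of the perfect difference set, Singer's theorem, which rests on the cyclic collineation of $\mathrm{PG}(2,q)$ given by multiplication by a generator of $\mathbb F_{q^3}^*$. I would cite this rather than reprove it. Everything after that is the short covering argument above.
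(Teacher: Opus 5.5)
Your proposal is correct and is essentially the argument the paper relies on. The paper gives no proof of its own beyond citing Singer's collineation-based construction; you cite exactly that construction (the perfect difference set $D$), verify that $D$ and its translates are Sidon, and derive the covering by taking the $q+1$ translates $D+j$ with $j\in x_0-D$, the pencil of lines through $x_0$. Your purely combinatorial finish, writing $y-x_0=d-d'$ so that $y\in D+(x_0-d')$, is the cleaner of your two routes: it uses only the perfect difference property and avoids needing to justify that the translates of $D$ are exactly the lines of $\mathrm{PG}(2,q)$.
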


We can now prove Theorem \ref{AmelioAbbott}.

\begin{proof}[Proof of Theorem \ref{AmelioAbbott}]
Let $A\subset \Z$ be finite, with cardinality $n\in\N^*$. Lemma \ref{Freiman} will serve as a "compression" step before applying Singer's covering to find, by averaging, a large Sidon set containing many elements of $A$. For the compression to be efficient, we would like to "lose" as few elements as possible in terms of $\frac{|A|^2}{2m}$, and therefore to choose $m$ as large as possible. Conversely, if $m$ is too large, then $\Z/m\Z$ is too large, and the Singer covering will be too spread out, reducing the concentration of the compressed elements inside one of the Sidon sets in the covering. The right choice is to take $m$ of order $|A|$, that is, $m=(c+o(1))n$, with $c>1$ in order to keep a positive proportion of the elements. Let therefore $c>1$.

    We first recall that, as $x\rightarrow +\infty$, there exists a prime number $p=p(x)$ such that $p^2+p+1=(1+o(1))x$. Indeed, $\sqrt{x}$ lies between two consecutive prime numbers $p_1$ and $p_2$, hence
    $$\frac{p_1}{p_2}\leqslant \frac{\sqrt{x}}{p_2}\leqslant 1,$$
    and by the Prime Number Theorem, $\frac{p_1}{p_2}$ tends to $1$, so $p_2=\sqrt{x}(1+o(1))$. It follows that $p_2^2=x(1+o(1))$ and hence $p_2^2+p_2+1=x(1+o(1))$.

    Take $x=cn$. From the previous argument, there exists a prime number $p$ such that $p^2+p+1=cn(1+o(1))$. Set $m=p^2+p+1$. By Lemma \ref{Freiman}, there exists $C\subseteq A$ with $|C|=\left( \frac{1}{2}-\frac{1}{2c}+o(1)\right) n$ and an injective Freiman $2$-morphism $\phi$ from $C$ into $\Z/m\Z$.

    Since $m=p^2+p+1$, by Lemma \ref{LemmeSinger}, we can cover $\Z/m\Z$ by $p+1$ Sidon sets $\left\lbrace S_i\right\rbrace_{i=1}^{p+1}$, each of cardinality $p+1$. By the pigeonhole principle, there exists $S\in \left\lbrace S_i\right\rbrace_{i=1}^{p+1}$ such that $|S\cap \phi(C)|\geqslant \frac{|\phi(C)|}{p+1}$. By injectivity, $|\phi(C)|=|C|=\left( \frac{1}{2}-\frac{1}{2c}+o(1)\right) n$, and $p+1=\sqrt{cn}(1+o(1))$, hence
    $$|S\cap \phi(C)|\geqslant \left(\frac{c-1}{2c\sqrt{c}}+o(1)\right) \sqrt{n}.$$
    Set $S_C=\phi^{-1}(S\cap\phi(C))$. This is a subset of $C$, and hence of $A$, with cardinality greater than $\left(\frac{c-1}{2c\sqrt{c}}+o(1)\right) \sqrt{n}$. Moreover, if there exist $a,b,c,d\in S_C$ such that $a+b=c+d$, then $\phi(a)+\phi(b)=\phi(c)+\phi(d)$ since $\phi$ is a Freiman $2$-morphism. But $\phi(a),\phi(b),\phi(c),\phi(d)\in S$ by the definition of $S_C$, and $S$ is a Sidon set, so necessarily $\left\lbrace \phi(a),\phi(b)\right\rbrace=\left\lbrace \phi(c),\phi(d)\right\rbrace$. Again by injectivity, $\left\lbrace a,b\right\rbrace=\left\lbrace c,d\right\rbrace$, and therefore $S_C$ is a Sidon set.

    Finally, the derivative of $f:c\mapsto \frac{c-1}{2c\sqrt{c}}$ is $f'(c)=(3-c)\frac{\sqrt{c}}{4c^3}$, so the maximum of $f$ is attained at $c=3$. Thus
    $$H(n)\geqslant |S_3|\geqslant (f(3)+o(1))\sqrt n=\left( \frac{1}{3\sqrt{3}}+o(1)\right)\sqrt n.$$
\end{proof}

\subsection{Step 2: General case}

To handle the general case in $\R^N$, the idea is to project the point cloud onto a real line, and then to use a sufficiently fine Dirichlet approximation in order to reduce to rational numbers with a common denominator while preserving the Sidon data. The following lemma allows us to project onto the real line.

\begin{lem}\label{proj}
    Let $x_1,\dots,x_n$ be distinct points of $\R^N$. There exists a line $d\subset \R^N$ such that $\vert\left\lbrace p_d(x_1),\dots,p_d(x_n)\right\rbrace\vert=n$, where $p_d$ denotes the orthogonal projection onto $d$.
\end{lem}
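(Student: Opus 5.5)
The plan is to choose the direction of $d$ so that it avoids finitely many "bad" directions. We take $d$ to be the line through the origin spanned by a unit vector $u\in\R^N$. Then $p_d(x)=\langle x,u\rangle u$, so $p_d(x_i)=p_d(x_j)$ holds exactly when $\langle x_i-x_j,u\rangle=0$. Since the $x_i$ are distinct, each difference $v_{ij}=x_i-x_j$ with $i<j$ is a nonzero vector. Hence the set of bad directions
$$\mathcal H_{ij}=\left\lbrace u\in\R^N : \langle v_{ij},u\rangle=0\right\rbrace$$
is a hyperplane, that is, a proper linear subspace of $\R^N$.

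The key step is to show that $\R^N$ is not covered by the $\binom n2$ hyperplanes $\mathcal H_{ij}$. This is a standard fact. The quickest argument I would give is measure-theoretic, in the spirit of the earlier lemmas: each $\mathcal H_{ij}$ has Lebesgue measure zero in $\R^N$ (when $N\geqslant 1$), so a finite union of them is null and cannot be all of $\R^N$.

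An alternative, purely elementary argument is also available. Consider the curve $u(t)=(1,t,t^2,\dots,t^{N-1})$. Then $\langle v_{ij},u(t)\rangle$ is a nonzero polynomial in $t$, because $v_{ij}\neq 0$, and its degree is at most $N-1$. So each hyperplane meets the curve in at most $N-1$ values of $t$. Choosing $t$ outside these finitely many roots gives a vector $u$ lying in no $\mathcal H_{ij}$.

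With such a $u$ fixed, we set $d=\R u$. Then $\langle x_i,u\rangle\neq\langle x_j,u\rangle$ for all $i\neq j$, so the projections $p_d(x_i)$ are pairwise distinct and their number is $n$.

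The case $N=1$ is trivial, since one takes $d=\R$. There is no real obstacle in this proof. The only point requiring care is noting that a finite union of proper subspaces cannot exhaust $\R^N$, and either argument above handles it.

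In the subsequent use of the lemma, it is worth recording that the linear form $x\mapsto\langle x,u\rangle$ is additive. It therefore preserves the relations $x+y=z+t$ and, since it is injective on the set, it also transports the Sidon property back from the projected points.
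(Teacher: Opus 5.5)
Your proof is correct and is essentially the paper's argument: the paper likewise notes that $p_d(x_i)=p_d(x_j)$ exactly when $d$ is orthogonal to $x_i-x_j$, and picks a direction outside the finite, Lebesgue-null union of the hyperplanes $(x_i-x_j)^\perp$. Your moment-curve alternative is also valid, since $\langle v_{ij},u(t)\rangle$ is a nonzero polynomial of degree at most $N-1$ and so has finitely many roots, and it avoids measure theory altogether.
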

\begin{proof}
    It is enough to observe that $p_d(x_i)=p_d(x_j)$ if and only if $d$ is orthogonal to $(x_i,x_j)$. Consider all the linear hyperplanes $\left\lbrace\left\langle\overrightarrow{x_ix_j}\right\rangle^\perp\right\rbrace_{1\leqslant i<j\leqslant n}$. There are at most $\binom{n}{2}<+\infty$ of them, and therefore
    $$\lambda_N\left(\bigcup_{1\leqslant i<j\leqslant n}\left\langle\overrightarrow{x_ix_j}\right\rangle^\perp \right)=0,$$
    where $\lambda_N$ denotes Lebesgue measure on $\R^N$ (viewed as a vector space).
    In particular, there exists $\overrightarrow{u}\notin \bigcup_{1\leqslant i<j\leqslant n}\left\langle\overrightarrow{x_ix_j}\right\rangle^\perp$, and then $d=\left\langle\overrightarrow{u}\right\rangle$ works.
\end{proof}

Going from $\R$ to $\Z$ while preserving the Sidon data through Dirichlet approximation is well known (see for instance \cite{hardyWright}), but for the sake of completeness, and because of a few technical subtleties, we give the details below.

\begin{lem}\label{Dirichlet}
We have $F_1(n) = H(n)$.
\end{lem}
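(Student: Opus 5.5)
The inequality $F_1(n)\leqslant H(n)$ is immediate: every $n$-element subset of $\Z$ is an $n$-element subset of $\R$, and being a Sidon set depends only on the set itself, not on the ambient group. So the content of Lemma \ref{Dirichlet} is the reverse inequality $F_1(n)\geqslant H(n)$. We will show that for every finite $E=\{x_1,\dots,x_n\}\subset\R$ there is a bijection $\psi:E\to E'\subset\Z$ such that, for all $a,b,c,d\in E$,
$$a+b=c+d \iff \psi(a)+\psi(b)=\psi(c)+\psi(d).$$
Given such a $\psi$, the Sidon subsets of $E$ and of $E'$ correspond under $\psi$. Hence $E$ contains a Sidon subset of size at least $H(n)$, and taking the minimum over $E$ gives $F_1(n)\geqslant H(n)$.

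To construct $\psi$, consider the finite set of linear forms
$$L(a,b,c,d)=a+b-c-d, \qquad a,b,c,d\in E.$$
Split it into the forms that vanish and those that do not, and set
$$\delta=\min\{|a+b-c-d| : a+b\neq c+d\}>0.$$
We want a common denominator $q\geqslant1$ and integers $p_i$ such that $p_i/q$ approximates $x_i$ and such that every vanishing relation among the $x_i$ is still exactly satisfied by the $p_i$.

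The main obstacle is this exact preservation of the relations $a+b=c+d$. A naive rounding $p_i=\mathrm{round}(qx_i)$ would only give $|p_a+p_b-p_c-p_d|<2$, not equality. We handle it as follows. Let $V\subset\R^n$ be the solution space of the homogeneous linear system made of all relations $y_a+y_b=y_c+y_d$ that hold for $E$. This system has integer coefficients, so $V$ has a basis $v_1,\dots,v_r$ of rational, hence after scaling integer, vectors. The point $x=(x_1,\dots,x_n)$ lies in $V$, so $x=\sum_j t_j v_j$ with $t_j\in\R$. Apply simultaneous Dirichlet approximation to $(t_1,\dots,t_r)$: for any $Q$ there are $q\leqslant Q^r$ and integers $s_j$ with
$$|qt_j-s_j|<\frac1Q.$$
Set $p=\sum_j s_j v_j\in V\cap\Z^n$. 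Then every relation holding in $E$ holds exactly for $p$, and
$$\|qx-p\|_\infty\leqslant \frac{K}{Q},$$
where $K=\sum_j\|v_j\|_\infty$.

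Now take $Q$ large, namely $Q>8K/\delta$. For any non-vanishing form,
$$|p_a+p_b-p_c-p_d|\geqslant q\delta-\frac{4K}{Q}\geqslant \delta-\frac{4K}{Q}>0$$
since $q\geqslant1$. So non-relations are preserved as well.

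Injectivity of $\psi:x_i\mapsto p_i$ is the special case $a=c$ of the Sidon equation: for distinct $x_i,x_j$, the form $x_i+x_j-x_j-x_j=x_i-x_j$ is non-vanishing, so $p_i\neq p_j$. Hence $|E'|=n$, and $\psi$ is the required bijection.
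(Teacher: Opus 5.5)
Your proof is correct, but it takes a more elaborate route than the paper.

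\textbf{What the paper does.} It applies simultaneous Dirichlet approximation directly to $(a_1,\dots,a_n)$. This gives $m\geqslant 1$ and integers $r_i$ with $|ma_i-r_i|<1/M$, where $M>\max(4,2/\Delta)$ and $\Delta$ is the minimal gap between consecutive $a_i$. If $a_i+a_h=a_k+a_l$, then $r_i+r_h-r_k-r_l$ is an integer of absolute value $<4/M<1$, hence zero.

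\textbf{The obstacle you name is not real.} Exact preservation of relations comes for free once the common denominator is chosen by Dirichlet rather than arbitrarily. Your bound $<2$ for naive rounding only reflects rounding errors of size $1/2$; Dirichlet makes each error smaller than $1/M$. In the paper, the condition $M>2/\Delta$ then forces $r_1<\dots<r_n$. A Sidon subset of $\{r_i\}$ pulls back to a Sidon subset of $\{a_i\}$, so the paper only needs an injective Freiman $2$-morphism, not the converse implication you also prove.

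\textbf{What your route buys.} Your detour through an integral basis of the relation space $V$ puts $p$ in $V\cap\Z^n$, so every relation holds exactly whatever the approximation quality. Together with your bound $|p_a+p_b-p_c-p_d|\geqslant q\delta-4K/Q>0$, this gives a genuine Freiman $2$-isomorphism onto a set of integers. That is the standard argument that finite subsets of torsion-free groups are Freiman-isomorphic to sets of integers. It is more robust: it preserves any prescribed family of integer linear relations exactly, whatever the size of their coefficients.

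For the Sidon equation, though, it buys nothing over the paper's short argument. The paper's argument also yields a full isomorphism if you additionally take $M>4/\delta$, since $|r_a+r_b-r_c-r_d|>m\delta-4/M\geqslant \delta-4/M$.

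A small slip: the form $x_i+x_j-x_j-x_j$ corresponds to $b=c=d$, not $a=c$.
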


\begin{proof}
Since $\Z \subset \R$, we clearly have $F_1(n) \leq H(n)$. To prove the other inequality, let $A=\{a_1, \dots, a_n\}\subset\R$ and let $M>4$ be an integer. The condition $M>4$ is here to ensure the transfer of the Sidon property. For every $0\leqslant k\leqslant M^n$, consider the points $\mathfrak{a}_k=(\left\lbrace ka_i\right\rbrace )_{1 \le i \le n}$. These are points of the hypercube $[0,1[^n$, which decomposes into $M^n$ small hypercubes of side length $1/M$. By the pigeonhole principle, one of them contains two points $\mathfrak{a}_k$ and $\mathfrak{a}_{k'}$, which means that $|\left\lbrace ka_i\right\rbrace-\left\lbrace k'a_i\right\rbrace|<1/M$ for every $i\in\left\lbrace 1,\dots,n\right\rbrace$. Since
    $|\left\lbrace ka_i\right\rbrace-\left\lbrace k'a_i\right\rbrace|=|ka_i-\lfloor ka_i\rfloor-k'a_i+\lfloor k'a_i\rfloor|$, we get $\text{dist}(m_Ma_i,\Z)<1/M$ for every $i\in\left\lbrace 1,\dots,n\right\rbrace$ (where we have set $m_M=|k-k'|$). Thus, for every $M>4$ and every $i\in\left\lbrace 1,\dots,n\right\rbrace$, there exist $m_M\in\N^*$ and $r_{M,i}\in\Z$ such that $|m_Ma_i-r_{M,i}|<1/M$.
    
    Since this is true for arbitrarily large $M$, we shall choose $M$ in such a way that the $\left\lbrace r_{M,i}\right\rbrace_{i=1}^n$ are all distinct.

    Without loss of generality, assume that $a_1<\dots<a_n$ and set $\Delta=\min \left\lbrace a_{i+1}-a_i\right\rbrace$. Let $M>2/\Delta$ and let $i\in\left\lbrace 1,\dots,n-1\right\rbrace$. We have
    \begin{align*}
    r_{M,i+1}-r_{M,i} & =a_{i+1}m_M-a_i{m_M}+\left(r_{M,i+1}-a_{i+1}{m_M}\right)+\left( a_i{m_M}-r_{M,i}\right) \\
    & \geqslant (a_{i+1}-a_i)m_M -\left|r_{M,i+1}-a_{i+1}{m_M}\right|-\left|a_i{m_M}-r_{M,i}\right| \\
    & \geqslant \Delta{m_M}-\frac{2}{M}\geqslant \Delta-\frac{2}{M}>0,
    \end{align*}
    and therefore $r_{M,1}<\dots<r_{M,n}$. In particular, they are all distinct.
    
    Choose now $M>\max(4,2/\Delta)$ and let $\left\lbrace r_{M,j}\right\rbrace_{j\in J}$ be a Sidon set contained in $\left\lbrace r_{M,i}\right\rbrace_{i=1}^n$ ($J\subseteq \left\lbrace 1,\dots, n\right\rbrace$). Then $\left\lbrace a_j\right\rbrace_{j\in J}$ is a Sidon set. Indeed, let $i,h,k,l\in J$ be such that $a_i+a_h=a_k+a_l$. We have
     \begin{align*}
        |r_{M,i}+r_{M,h}-r_{M,k}-r_{M,l}| & =|(r_{M,i}-m_Ma_i)+(r_{M,h}-m_Ma_h)\\&\quad +(m_Ma_k-r_{M,k})+(m_Ma_l-r_{M,l})| \\
        & < 4/M \\ & <1.
    \end{align*}
Now $r_{M,i}+r_{M,h}-r_{M,k}-r_{M,l}$ is an integer, so it must be zero. But since $\left\lbrace r_{M,j}\right\rbrace_{j\in J}$ is a Sidon set, we have
$\left\lbrace r_{M,i},r_{M,h}\right\rbrace=\left\lbrace r_{M,k},r_{M,l}\right\rbrace$, and therefore $\{i, k\} = \{h, l\}$ since the $r_{M,j}$'s are pairwise distinct. In particular $\left\lbrace a_{i},a_{h}\right\rbrace=\left\lbrace a_{k},a_{l}\right\rbrace$ and we obtained that $\left\lbrace a_{j}\right\rbrace_{j\in J}$ is also a Sidon set, which concludes the proof.
\end{proof}

\begin{proof}[Proof of Theorem \ref{MainResult}] Let $A = \{x_1,\dots,x_n\}$ be a set of $n$ distinct points of $\R^N$. By Lemma \ref{proj}, there exists a line $d\subset \R^N$ such that $\vert\left\lbrace p_d(A)\right\rbrace\vert=n$. In particular, a Sidon subset $S$ of $p_d(A)$ corresponds to a Sidon subset of $A$. Indeed, for $x,y,z,t\in A$, $p_d(x)+p_d(y)\neq p_d(z)+p_d(t)$ implies $x+y\neq z+t$, so $p_d^{-1}(S)$ is a Sidon subset of $A$ of the same cardinality. This shows that $F_N(n) \leq F_1(n)$, and in fact $F_N(n) = F_1(n)$ since the reverse inequality is obvious.

By Lemma \ref{Dirichlet}, we have $F_1(n) = H(n)$ and by Theorem \ref{AmelioAbbott}, we obtain
$$F_N(n)\geqslant\left(\frac{1}{3\sqrt 3}+o(1)\right)\sqrt n \gtrsim 0.19\sqrt n.$$
\end{proof}

\section{Adaptation to $B_2[g]$ sets and generalizations}\label{finale}

\subsection{Result for $B_2[g]$ sets}

We recall that a $B_2[g]$ set is a set such that each element of the ambient semi-group has at most $g$ representations as a sum of two elements of the set, counted up to permutation. In other words, $B$ is a $B_2[g]$ set if, for every $k$, the equation $x+y=k$ has at most $g$ solutions $\left\lbrace x,y\right\rbrace$ with $x,y\in B$ (see \cite{EAB2g} for a recent survey of this notion). For such sets, the proof works in essentially the same way, since one can establish the following analogue of the Singer covering.

\begin{lem}\label{SingerB2}
    Let $q$ be a prime power. There exist $q+1$ $B_2[g]$ sets, each of cardinality $g(q+1)$, covering $\Z/g(q^2+q+1)\Z$.
\end{lem}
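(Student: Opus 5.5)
The plan is to lift Singer's covering of $\Z/m\Z$, with $m=q^2+q+1$, to $\Z/gm\Z$ by taking full preimages under the reduction map. Set $m=q^2+q+1$ and let $\pi:\Z/gm\Z\to\Z/m\Z$ be the canonical surjection, which is a group homomorphism with fibres of size $g$. By Lemma \ref{LemmeSinger}, there are Sidon sets $S_1,\dots,S_{q+1}$, each of cardinality $q+1$, covering $\Z/m\Z$. Define $T_i=\pi^{-1}(S_i)\subset\Z/gm\Z$. Each $T_i$ has cardinality $g(q+1)$, since every fibre of $\pi$ has exactly $g$ elements. The $T_i$ cover $\Z/gm\Z$ because the $S_i$ cover $\Z/m\Z$. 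It remains to check that each $T_i$ is a $B_2[g]$ set.

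Fix $k\in\Z/gm\Z$ and consider unordered pairs $\{x,y\}\subset T_i$ with $x+y=k$. Applying $\pi$ gives $\pi(x)+\pi(y)=\pi(k)$ with $\pi(x),\pi(y)\in S_i$. Since $S_i$ is Sidon, the unordered pair $\{\pi(x),\pi(y)\}=\{s,s'\}$ is uniquely determined by $k$. Given this pair, choose $x$ in the fibre $\pi^{-1}(s)$, which leaves $g$ possibilities; then $y=k-x$ is forced, and it automatically lies in $\pi^{-1}(s')\subset T_i$. So there are at most $g$ ordered solutions with $\pi(x)=s$, and hence at most $g$ unordered pairs $\{x,y\}$.

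The main obstacle is this last count: I need to avoid double counting when passing from ordered to unordered pairs, and to handle the case $s=s'$. If $s\neq s'$, each unordered pair has exactly one element over $s$, so the count is at most $g$. If $s=s'$, the ordered solutions number at most $g$ (one for each $x\in\pi^{-1}(s)$), so the unordered ones number at most $g$ as well. In both cases the bound is at most $g$ representations, which proves that each $T_i$ is a $B_2[g]$ set and establishes the lemma.
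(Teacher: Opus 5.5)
Your proposal is correct and is essentially the paper's proof: both take full preimages of Singer's Sidon sets under the reduction $\Z/g(q^2+q+1)\Z\to\Z/(q^2+q+1)\Z$ and bound the representations of $k$ by noting that $x$ in a fibre of size $g$ forces $y=k-x$. Your separate treatment of the case $s=s'$ makes the passage from ordered to unordered pairs slightly more explicit than the paper does.
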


\begin{proof}
      Set $N=q^2+q+1$. We start from the Singer covering
      $$\Z/N\Z=\bigcup_{i=1}^{q+1} S_i,$$
      where each $S_i$ is a Sidon set of cardinality $q+1$. Let
      $$\pi:\Z/gN\Z\rightarrow \Z/N\Z$$
      be the canonical projection, and set
      $$S'_i=\pi^{-1}(S_i).$$
      Then
      $$\Z/gN\Z = \bigcup_{i=1}^{q+1} S'_i,$$
      and each $S'_i$ has cardinality $g(q+1)$. We now show that each $S'_i$ is a $B_2[g]$ set. Suppose that
      $$x+x'=k$$
      with $x,x'\in S'_i$. Applying $\pi$, we get
      $$\pi(x)+\pi(x')=\pi(k).$$
      Since $\pi(x),\pi(x')\in S_i$ and $S_i$ is a Sidon set, the unordered pair
      $$\left\lbrace \pi(x),\pi(x')\right\rbrace$$
      is uniquely determined by $\pi(k)$. But $\pi(x)$ and $\pi(x')$ each have exactly $g$ preimages by $\pi$, and the condition $x+x'=k$ makes it so one preimage determines exactly one for the other. Therefore, there are at most $g$ unordered pairs $\{a, b\} \subset S_i' = \pi^{-1}(S_i)$ such that $a+b=k$.

      Therefore $S'_i$ is a $B_2[g]$ set, which concludes the proof.
 \end{proof}
 
 \begin{rmq}
 Proposition \ref{SingerB2} also gives, in particular, the existence of $B_2[g]$ sets of cardinality $\sqrt{gm}$ in the first $m$ integers, but this is already well-known; see for instance \cite{EAB2g}.
 \end{rmq}
 
 \vspace{0.4cm}
 
 The proofs of Theorem \ref{AmelioAbbott}, Lemma \ref{Dirichlet}, and Theorem \ref{MainResult} then adapt without further changes, except that in the proof of Theorem \ref{AmelioAbbott} one now approximates $x=(cn)/g$ by a number of the form $p^2+p+1$. This gives the following theorem.
 
 \begin{theorem}\label{généB2}
 For every fixed $N,g\geqslant 1$, we have
    $$F_{g,N}(n)\geqslant\left(\frac{\sqrt{g}}{3\sqrt 3}+o(1)\right)\sqrt n \gtrsim 0.19\sqrt{gn},$$
    where
    $$F_{g,N}(n)=\min\limits_{\substack{E\subset\R^N \\ |E|=n}}\max\left\lbrace |S| : S\subset E, S \text{ is a } B_2[g]\text{ set}\right\rbrace.$$
 \end{theorem}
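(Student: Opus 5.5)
The proof follows the three-stage structure used for Theorem \ref{MainResult}: reduce from $\R^N$ to $\Z$, compress into a cyclic group using Lemma \ref{Freiman}, then average over the covering of Lemma \ref{SingerB2}.

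\textbf{Reduction from $\R^N$ to $\Z$.} First we check that the projection and Dirichlet steps carry over to $B_2[g]$ sets. For Lemma \ref{proj}, choose a line $d$ on which $p_d$ is injective on $E$. Since $p_d$ is linear, every representation $x+y=k$ in $E$ maps to a representation $p_d(x)+p_d(y)=p_d(k)$. Injectivity makes this map on unordered pairs injective. Hence if $p_d(S)$ is $B_2[g]$, so is $S$, and $F_{g,N}(n)\geqslant F_{g,1}(n)$.

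For Lemma \ref{Dirichlet}, take $M>\max(4,2/\Delta)$ and the integers $r_{M,i}$, which are pairwise distinct and in increasing order. Suppose $a_i+a_h=a_k+a_l$. Then $r_{M,i}+r_{M,h}=r_{M,k}+r_{M,l}$ exactly, because the difference is an integer of absolute value $<4/M<1$. So all pairs $\{a_i,a_h\}$ with a common sum $s$ give pairs $\{r_{M,i},r_{M,h}\}$ with a common sum. The map $i\mapsto r_{M,i}$ is injective, so distinct pairs stay distinct. If $\{r_{M,j}\}_{j\in J}$ is $B_2[g]$, then at most $g$ such pairs exist, and $\{a_j\}_{j\in J}$ is $B_2[g]$. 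It therefore suffices to prove the bound for $A\subset\Z$ with $|A|=n$.

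\textbf{Compression into $\Z/m\Z$.} Fix $c>1$. Set $x=cn/g$ and pick a prime $p$ with $p^2+p+1=(1+o(1))cn/g$, exactly as in the proof of Theorem \ref{AmelioAbbott}. Let $m=g(p^2+p+1)=(1+o(1))cn$. Lemma \ref{Freiman} gives $C\subseteq A$ with $|C|\geqslant(\frac12-\frac1{2c}+o(1))n$ and an injective Freiman $2$-morphism $\phi:C\to\Z/m\Z$.

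\textbf{Averaging over the covering.} Lemma \ref{SingerB2} covers $\Z/m\Z$ by $p+1$ sets $S'_i$, each $B_2[g]$. By pigeonhole, some $S'$ satisfies $|S'\cap\phi(C)|\geqslant|C|/(p+1)$. Since $p+1=(1+o(1))\sqrt{cn/g}$, this gives
\[
|S'\cap\phi(C)|\geqslant\left(\frac{c-1}{2c}\sqrt{\frac{g}{c}}+o(1)\right)\sqrt n .
\]
Put $T=\phi^{-1}(S'\cap\phi(C))$.

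\textbf{Transfer of the $B_2[g]$ property (main step).} Fix $k\in\Z$ and consider the unordered pairs $\{a,b\}\subset T$ with $a+b=k$. We need to show there are at most $g$ of them. Any two such pairs $\{a,b\},\{a',b'\}$ satisfy $a+b=a'+b'$, so the Freiman property gives $\phi(a)+\phi(b)=\phi(a')+\phi(b')$. Therefore all these pairs are sent to pairs in $S'$ with one common sum in $\Z/m\Z$. Injectivity of $\phi$ makes the induced map on unordered pairs injective. As $S'$ is $B_2[g]$, there are at most $g$ such pairs.

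Unlike the Sidon case, this argument needs the images of \emph{all} representations of $k$ to share one common sum. That holds because the Freiman $2$-morphism applies pairwise to equal sums.

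\textbf{Optimization.} Maximizing $\frac{c-1}{2c\sqrt c}$ gives $c=3$, as before. This yields $\left(\frac{\sqrt g}{3\sqrt3}+o(1)\right)\sqrt n$, which proves the theorem.
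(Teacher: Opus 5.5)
Your proposal is correct and follows the same route as the paper, which simply asserts that the proofs of Theorem \ref{AmelioAbbott}, Lemma \ref{Dirichlet} and Theorem \ref{MainResult} carry over once one takes $m=g(p^2+p+1)$ with $p^2+p+1\approx cn/g$ and uses the covering of Lemma \ref{SingerB2}. You fill in the checks the paper leaves implicit: the $B_2[g]$ property (not just the Sidon property) pulls back through the injective projection, the Dirichlet rounding, and the injective Freiman $2$-morphism, because each of these maps sends all representations of a fixed sum injectively to representations of a single common sum.
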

 
 \begin{rmq}
 One could try to improve this result, especially in the case $g=2$, for which constructions better than $\sqrt{gm}$ are known; see \cite{PlagneHabsieger}. However, these are constructions in $\left\lbrace 1,\dots,m\right\rbrace$, not in $\Z/m\Z$. They can still be used by embedding $\left\lbrace 1,\dots,m\right\rbrace$ into a cyclic group of larger order and then applying a translation argument, as in Lemma 6 of \cite{Sulyok}, but this leads to a weaker bound.
  \end{rmq}

  
  \subsection{Extension to other linear constraint equations.}
  
  Let us finally mention that the method is not specific to Sidon's equation. It can be adapted to other additive constraints of the form
$$x_1+\cdots+x_k=y_1+\cdots+y_k,$$
or, more generally, to balanced linear equations with positive integer coefficients, after expanding the coefficients as repeated variables. The only modification in the compression step is to replace the set $B_\theta$ in Lemma \ref{Freiman} by a thinner set, for instance
$$B_\theta^{(k)}=\left\lbrace a\in A:\left\lbrace am\theta\right\rbrace<\frac1k\right\rbrace.$$
Then, for any $b_1,\dots,b_k\in B_\theta^{(k)}$, no carry occurs when taking the integer part of
$$(b_1+\cdots+b_k)m\theta.$$
Thus the map
$$\phi_\theta(a)=\left\lfloor am\theta\right\rfloor \mod m$$
is a Freiman $k$-morphism on $B_\theta^{(k)}$. After removing collisions exactly as above, one obtains an injective Freiman $k$-morphism from a large subset of $A$ to $\Z/m\Z$, with a weaker constant coming from the smaller proportion of elements kept in $B_\theta^{(k)}$.

There is, however, one big difference with the Sidon case. For general linear configurations, one usually does not have an analogue of Singer's covering. In that setting, the covering argument can be replaced by the translation trick of Koml\'os, Sulyok and Szemerédi (Lemma 6 in \cite{Sulyok}). Namely, if $B\subset \Z/m\Z$ is a large set avoiding the prescribed configuration, then so is every translate $B+t$, since the equation is balanced. Averaging over $t\in \Z/m\Z$ ensures that some translate of $B$ has large intersection with the compressed set. Pulling this intersection back through the injective Freiman morphism then gives a large subset of the original set avoiding the same configuration. In this way, any sufficiently dense construction in a finite cyclic group can be transferred to arbitrary finite subsets of $\Z$, and then, by the projection and Dirichlet approximation arguments above, to finite subsets of $\R^N$. We do not pursue here the optimization of the constants for these more general equations.
  
\printbibliography

\end{document}